
\documentclass[11pt]{amsart}
\usepackage{amssymb, amstext, amscd, amsmath}
\usepackage[all]{xy}




%
\makeatletter
\def\@cite#1#2{{\m@th\upshape\bfseries%
[{#1\if@tempswa{\m@th\upshape\mdseries, #2}\fi}]}}
\makeatother
\theoremstyle{plain}
\newtheorem{theorem}{Theorem}[section]
\newtheorem{corollary}[theorem]{Corollary}
\newtheorem{proposition}[theorem]{Proposition}
\newtheorem{lemma}[theorem]{Lemma}

\theoremstyle{definition}
\newtheorem{definition}[theorem]{Definition}

\newtheorem{remark}[theorem]{Remark}
\theoremstyle{remark}




\newcommand{\bbC}{{\mathbb{C}}}

\newcommand{\bbN}{{\mathbb{N}}}

\newcommand{\A}{{\mathcal{A}}}

\newcommand{\C}{{\mathcal{C}}}

\newcommand{\F}{{\mathcal{F}}}

\renewcommand{\H}{{\mathcal{H}}}

\newcommand{\J}{{\mathcal{J}}}
\newcommand{\K}{{\mathcal{K}}}
\renewcommand{\L}{{\mathcal{L}}}
\newcommand{\M}{{\mathcal{M}}}

\renewcommand{\O}{{\mathcal{O}}}

\newcommand{\T}{{\mathcal{T}}}
\newcommand{\U}{{\mathcal{U}}}






\renewcommand{\phi}{\varphi}
\newcommand{\upchi}{{\raise.35ex\hbox{\ensuremath{\chi}}}}


\newcommand{\fin}{\operatorname{fin}}

\newcommand{\fn}{\operatorname{fin}}
\newcommand{\id}{{\operatorname{id}}}

\newcommand{\reg}{\operatorname{reg}}
\newcommand{\spn}{\operatorname{span}}
\newcommand{\sce}{\operatorname{sce}}



\newcommand{\ca}{\mathrm{C}^*}
\newcommand{\cenv}{\mathrm{C}^*_{\text{env}}}

\newcommand{\sca}[1]{\left\langle#1\right\rangle}




\begin{document}
\title[Hyperrigidity of Tensor algebras]{The hyperrigidity of tensor algebras of $\ca$-correspondences}

\author[E.G. Katsoulis]{Elias~G.~Katsoulis}
\address {Department of Mathematics 
\\East Carolina University\\ Greenville, NC 27858\\USA}
\email{katsoulise@ecu.edu}

\author[C. Ramsey]{Christopher~Ramsey}
\address {Department of Mathematics and Statistics
\\MacEwan University \\ Edmonton, AB \\Canada}
\email{ramseyc5@macewan.ca}

\begin{abstract}
Given a $\ca$-correspondence $X$, we give necessary and sufficient conditions for the tensor algebra $\T_X^+$ to be hyperrigid. In the case where $X$ is coming from a topological graph we obtain a complete characterization.
\end{abstract}

\thanks{2010 {\it  Mathematics Subject Classification.}
46L07, 46L08, 46L55, 47B49, 47L40}
\thanks{{\it Key words and phrases:} $\ca$-correspondence, tensor algebra, hyperrigid, topological graph, operator algebra} 

\maketitle

\section{Introduction}

A not necessarily unital operator algebra $\A$ is said to be \textit{hyperrigid} if given any non-degenerate $*$-homomorphism 
\[
\tau \colon \cenv(\A) \longrightarrow B(\H)
\]
then $\tau$ is the only completely positive, completely contractive extension of the restricted map  $\tau_{\mid \A}$.  Arveson coined the term hyperrigid in \cite{Arv} but he was not the only one considering properties similar to this at the time, e.g. \cite{Duncan}. 

There are many examples of hyperrigid operator algebras such as those which are Dirichlet but the situation was not very clear in the case of tensor algebras of C$^*$-correspondences. It was known that the tensor algebra of a row-finite graph is hyperrigid \cite{Duncan}, \cite{Kak} and Dor-On and Salmomon \cite{DorSal} showed that row-finiteness completely characterizes hyperrigidity for such graph correspondences. These approaches, while successful, did not lend themselves to a more general characterization.  

The authors, in a previous work \cite{KatRamHN}, developed a sufficient condition for hyperrigidity in tensor algebras. In particular, if Katsura's ideal acts non-degenerately on the left then the tensor algebra is hyperrigid. The motivation was to provide a large class of hyperrigid C$^*$-correspondence examples as crossed products of operator algebras behave in a very nice manner when the operator algebra is hyperrigid. This theory was in turn leveraged to provide a positive confirmation to the Hao-Ng isomorphism problem in the case of graph correspondences and arbitrary groups. For further reading on the subject please see \cite{KatRamMem, KatRamCP2, KatRamHN}.

In this paper, we provide a necessary condition for the hyperrigidity of a tensor algebra, that a C$^*$-correspondence cannot be $\sigma$-degenerate, and show that this completely characterizes the situation where the C$^*$-correspondence is coming from a topological graph, which generalizes both the graph correspondence case and the semicrossed product arising from a multivariable dynamical system.

\subsection{Regarding hyperrigidity}

The reader familiar with the literature recognizes that in our definition of hyperrigidity, we are essentially asking that the restriction on $\A$ of any non-degenerate representation of $\cenv(\A)$ possesses the \textit{ unique extension property} (abbr. UEP). According to \cite[Proposition 2.4]{DorSal} a representation $\rho : \A \rightarrow B(\H)$, degenerate or not, has the UEP if and only if $\rho$ is a maximal representation of $\A$, i.e., whenever $\pi$ is a representation of $\A$ dilating $\rho$, then $\pi = \rho \oplus \pi'$ for some representation $\pi'$. Our definition of hyperrigidity is in accordance with Arveson's nomenclature \cite{Arv}, our earlier work \cite{Kat, KatRamHN} and the works of Dor-On and Salomon \cite{DorSal} and Salomon \cite{Sal}, who systematized quite nicely the non-unital theory. 

An alternative definition of hyperrigidity for $\A$ may ask that \textit{any} representation of $\cenv(\A)$, not just the non-degenerate ones, possesses the UEP when restricted on $\A$. It turns out that for operator algebras with a positive contractive approximate unit\footnote{which includes all operator algebras appearing in this paper}, such a definition would be equivalent to ours \cite[Proposition 3.6 and Theorem 3.9]{Sal} . However when one moves beyond operator algebras with an approximate unit, there are examples to show that the two definitions differ. One such example is the non-unital operator algebra $\A_V$ generated by the unilateral forward shift $V$. It is easy to see that $\A_V$ is hyperrigid according to our definition and yet the zero map, as a representation on $\H=\bbC$, does not have the UEP. (See for instance \cite[Example 3.4]{Sal}.)

\section{Main results}

A $\ca$-correspondence $(X,\C,\phi_X)$ (often just $(X,\C)$) consists of a $\ca$-algebra $\C$, a Hilbert $\C$-module $(X, \sca{\phantom{,},\phantom{,}})$ and a
(non-degenerate) $*$-homomorphism $\phi_X\colon \C \rightarrow \L(X)$ into the C$^*$-algebra of adjointable operators on $X$. 

An isometric (Toeplitz) representation $(\rho,t, \H)$ of a $\ca$-correspondence $(X,\C)$ consists of  a non-degenerate $*$-homomorphism $\rho\colon \C \rightarrow B(\H)$ and a linear map $t\colon X \rightarrow B(\H)$, such that 
 $$\rho(c)t(x)=t(\phi_X(c)(x)), \ \ \textrm{and}$$
 $$t(x)^*t(x')=\rho(\sca{x,x'}),$$
for all $c\in \C$ and $x, x'\in X$. 
These relations imply that the $\ca$-algebra generated by this isometric representation equals the closed linear span of $$t(x_1)\cdots  t(x_n)t(y_1)^*\cdots t(y_m)^*, \quad x_i,y_j\in X.$$
Moreover, there exists a $*$-homomorphism
$\psi_t:\K(X)\rightarrow B$, such that $$\psi_t(\theta_{x,y})=
t(x)t(y)^*,$$ where $\K(X) \subset \L(X)$ is the subalgebra generated by the operators $\theta_{x,y}(z) = x \langle y,z\rangle, \ x,y,x\in X$, which are called by analogy the compact operators.

The Cuntz-Pimsner-Toeplitz $\ca$-algebra $\T_X$ is defined as the $\ca$-algebra generated by the image of $(\rho_{\infty} , t_{\infty})$, the universal isometric representation. This is universal in the sense that for any other isometric representation there is a $*$-homomorphism of $\T_X$ onto the $\ca$-algebra generated by this representation in the most natural way.

The \emph{tensor algebra} $\T_{X}^+$ of a $\ca$-correspondence
$(X,\C)$ is the norm-closed subalgebra of $\T_X$ generated by $\rho_{\infty}(\C)$ and $t_{\infty}(X)$. See \cite{MS} for more on these constructions.
 
Consider Katsura's ideal $$\J_X\equiv \ker\phi_X^{\perp}\cap \phi_X^{-1}(\K(X)).$$ An isometric representation $(\rho, t)$ of $(X, \C,\phi_X)$ is said to be covariant (or Cuntz-Pimsner) if and only if $$\psi_t ( \phi_X(c)) = \rho (c),$$ for all $c \in \J_X$. The Cuntz-Pimsner algebra $\O_X$ is the universal $\ca$-algebra for all isometric covariant representations of $(X, \C)$, see \cite{KatsuraJFA} for further details. Furthermore, the first author and Kribs \cite[Lemma 3.5]{KatsoulisKribsJFA} showed that $\O_X$ contains a completely isometric copy of $\T_X^+$ and $\cenv(\T_X^+)\simeq \O_X$.

We turn now to the hyperrigidity of tensor algebras. In \cite{KatRamHN} a sufficient condition for hyperrigidity was developed, Katsura's ideal acting non-degenerately on the left of $X$. To be clear, non-degeneracy here means that $\overline{\phi_X(\J_X)X} = X$ which by Cohen's factorization theorem implies that we actually have $\phi_X(\J_X)X = X$.

\begin{theorem}[Theorem 3.1, \cite{KatRamHN}]\label{thm:hyperrigid}
Let $(X, \C)$ be a $\ca$-correspondence with $X$ countably generated as a right Hilbert $\C$-module. If $\phi_X(\J_X)$ acts non-degenerately on $X$, then $\T^+_X$ is a hyperrigid operator algebra.
\end{theorem}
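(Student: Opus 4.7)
The plan is to reformulate hyperrigidity as a maximality statement for dilations and then force every dilation to split via a block-matrix computation. By \cite[Proposition 2.4]{DorSal}, it suffices to show that for every non-degenerate $*$-representation $\tau:\O_X\to B(\H)$, the restriction $\rho:=\tau|_{\T_X^+}$ is maximal, in the sense that every completely contractive dilation of $\rho$ splits as $\rho\oplus\rho'$. Let $(\pi_0,t)$ denote the covariant isometric Toeplitz representation of $(X,\C)$ on $\H$ induced by $\tau$, so that $\psi_t(\phi_X(c))=\pi_0(c)$ for every $c\in\J_X$. By Muhly-Solel, any completely contractive dilation of $\rho$ itself dilates further to an isometric Toeplitz representation, and a routine semi-invariance argument shows that if $\H$ reduces the further dilation, it already reduces the original. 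We may therefore assume we are handed an isometric Toeplitz representation $(\pi,s)$ of $(X,\C)$ on $\K\supset\H$ whose compression to $\H$ recovers $(\pi_0,t)$.

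The first reductions are standard. Because $P_\H\pi(c)P_\H=\pi_0(c)$ is a $*$-homomorphism, a Schwarz-inequality argument forces $\H$ to reduce $\pi(\C)$, giving $\pi(c)=\pi_0(c)\oplus\pi'(c)$ on $\K=\H\oplus\H^\perp$. The identity $s(x)^*s(x)=\pi(\langle x,x\rangle)$ combined with block-diagonality of $\pi$ then yields $\|s(x)\xi\|=\|t(x)\xi\|=\|P_\H s(x)\xi\|$ for $\xi\in\H$, so $s(x)\H\subset\H$. Hence
\[
s(x)=\begin{pmatrix}t(x)&A(x)\\ 0&s'(x)\end{pmatrix}
\]
with $A(x):\H^\perp\to\H$, and the theorem reduces to showing that $A(x)=0$ for every $x\in X$.

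This is where both hypotheses come into play. Expanding the $(1,2)$-block of the identity $s(y)^*s(z)=\pi(\langle y,z\rangle)$, which is block-diagonal, yields the universal orthogonality relation $t(y)^*A(z)=0$ for all $y,z\in X$. Next, non-degeneracy of $\phi_X(\J_X)$ combined with Cohen's factorization theorem allows us to write any $x\in X$ as $x=\phi_X(c)z$ with $c\in\J_X$ and $z\in X$; then $s(x)=\pi(c)s(z)$, and reading the $(1,2)$-block of this identity gives $A(x)=\pi_0(c)A(z)$. Replacing $\pi_0(c)$ by $\psi_t(\phi_X(c))$ via covariance, and approximating $\phi_X(c)\in\K(X)$ in norm by finite sums $\sum_i\theta_{a_i,b_i}$, every summand contributes $t(a_i)t(b_i)^*A(z)=0$ by the previous relation; passing to the limit yields $A(x)=0$. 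The step I expect to be the main obstacle is this final coordination: the covariance identity $\pi_0(c)=\psi_t(\phi_X(c))$ lives on $\H$, while $A(z)$ arises from the larger space $\K$, and one must propagate the annihilation $t(y)^*A(z)=0$ through the $\K(X)$-limit that defines $\psi_t(\phi_X(c))$. The countably-generated hypothesis on $X$ enters only at the level of Muhly-Solel's dilation result.
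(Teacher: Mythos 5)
Your proof is correct, but it takes a genuinely different route from the one the paper relies on. The cited argument of \cite[Theorem 3.1]{KatRamHN} stays entirely on the completely positive side: given a non-degenerate $*$-representation $\tau$ of $\O_X$ and a completely contractive, completely positive extension $\tau'$ of $\tau|_{\T_X^+}$, it shows that the multiplicative domain of $\tau'$ is all of $\O_X$, using the Schwarz-inequality/multiplicative-domain lemma \cite[Proposition 1.5.7]{BRO} together with Kasparov's Stabilization Theorem, which supplies a sequential approximate unit $\sum_{n=1}^{k}\theta_{x_n,x_n}$ for $\K(X)$; the countably-generated hypothesis exists precisely to make that approximate unit available, and the rest is a chain of norm estimates. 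You instead run a dilation-theoretic argument: reduce to maximality via \cite[Proposition 2.4]{DorSal}, pass to an isometric Toeplitz dilation, and kill the corner $A(x)$ by combining the relation $t(y)^*A(z)=0$ (read off the $(1,2)$-block of $s(y)^*s(z)=\pi(\langle y,z\rangle)$) with Cohen factorization $x=\phi_X(c)z$, covariance $\pi_0(c)=\psi_t(\phi_X(c))$ for $c\in\J_X$, and norm-density of the finite-rank operators in $\K(X)$. All of your block computations check out, and your worry about $A(z)$ living on $\K$ is unfounded: $A(z)$ maps $\H^\perp$ into $\H$, so composing with $\psi_t(\phi_X(c))\in B(\H)$ and passing to the norm limit is harmless. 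Your approach buys two things: it replaces the inequality bookkeeping with a transparent structural argument, and it never actually uses countable generation (neither the dilation machinery nor the density of finite-rank operators needs it), so you prove the theorem without that hypothesis. The one step to state more carefully is the opening reduction: to justify restricting attention to isometric Toeplitz dilations, it is cleanest to pass to a maximal dilation (Dritschel--McCullough) and use $\cenv(\T_X^+)\simeq\O_X$ from \cite{KatsoulisKribsJFA}, so that the dilation extends to a $*$-representation of $\O_X$ and is therefore induced by a covariant isometric representation, rather than invoking Muhly--Solel on a possibly degenerate compression.
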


The proof shows that if $\tau'\colon \O_X  \longrightarrow B(\H)$ is a completely contractive and completely positive map that agrees with a $*$-homomorphism of $\O_X$ on $\T^+_{X}$ then the multiplicative domain of $\tau'$ must be everything. This is accomplished through the multiplicative domain arguments of \cite[Proposition 1.5.7]{BRO} and the fact that by $X$ being countably generated, Kasparov's Stabilization Theorem implies the existence of a sequence $\{x_n \}_{n=1}^{\infty}$ in $X$ so that $ \sum_{n=1}^{k} \theta_{x_n, x_n}$, $k=1, 2, \dots$, is an approximate unit for $\K(X)$. After quite a lot of inequality calculations one arrives at the fact that all of $\T_X^+$ is in the multiplicative domain and thus so is $\O_X$.

A $\ca$-correspondence $(X, \C)$ is called \textit{regular} if and only if $\C$ acts faithfully on $X$ by compact operators, i.e., $\J_X= \C$. We thus obtain the following which also appeared in \cite{KatRamHN}.
\begin{corollary} \label{regular hyper}
The tensor algebra of a regular, countably generated $\ca$-correspondence is necessarily hyperrigid.
\end{corollary}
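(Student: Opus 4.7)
The plan is to verify that the hypothesis of Theorem~\ref{thm:hyperrigid} is automatically met by a regular correspondence, so that the corollary falls out as a direct consequence. Regularity means $\J_X = \C$, so what must be checked is that $\phi_X(\C)$ acts non-degenerately on $X$; but non-degeneracy of $\phi_X$ is built into the definition of a $\ca$-correspondence given at the start of Section~2.

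First I would unpack the definition of Katsura's ideal under the assumption that $(X,\C)$ is regular. Faithfulness of $\phi_X$ forces $\ker\phi_X = 0$, hence $\ker\phi_X^{\perp} = \C$, and the assumption $\phi_X(\C) \subseteq \K(X)$ gives $\phi_X^{-1}(\K(X)) = \C$. Intersecting yields
\[
\J_X \;=\; \ker\phi_X^{\perp}\cap \phi_X^{-1}(\K(X)) \;=\; \C.
\]

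Next I would invoke non-degeneracy of $\phi_X$ from the very definition of a $\ca$-correspondence: $\overline{\phi_X(\C) X} = X$. Combining with the previous step,
\[
\overline{\phi_X(\J_X)X} \;=\; \overline{\phi_X(\C)X} \;=\; X,
\]
so $\phi_X(\J_X)$ acts non-degenerately on $X$. With $X$ countably generated as a right Hilbert $\C$-module, Theorem~\ref{thm:hyperrigid} now applies and delivers that $\T_X^+$ is hyperrigid.

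There is really no obstacle here; the corollary is a one-line consequence of Theorem~\ref{thm:hyperrigid} once one observes that regularity plus the standing non-degeneracy convention on $\phi_X$ forces $\phi_X(\J_X)X = X$. The only thing worth flagging explicitly is the reliance on the convention that $\phi_X$ is non-degenerate, which is stated at the opening of Section~2 and is what makes the passage from $\J_X = \C$ to $\overline{\phi_X(\J_X)X} = X$ immediate.
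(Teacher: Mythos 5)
Your proposal is correct and follows exactly the route the paper intends: regularity gives $\J_X=\C$, the standing non-degeneracy convention on $\phi_X$ gives $\overline{\phi_X(\J_X)X}=X$, and Theorem~\ref{thm:hyperrigid} then applies directly. The paper treats this as immediate and offers no separate proof, so your write-up simply makes the same one-line deduction explicit.
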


We seek a converse to Theorem~\ref{thm:hyperrigid}.

\begin{definition}
Let $(X, \C)$ be a $\ca$-correspondence and let $\J_X$ be Katsura's ideal. We say that $\phi_X(\J_X)$ acts \textit{$\sigma$-degenerately} on $X$ if there exists a representation $\sigma \colon \C \rightarrow B(\H)$ so that 
\[
\phi_X(\J_X)X\otimes_{\sigma}\H \neq X\otimes_{\sigma} \H.
\]
\end{definition}

\begin{remark} In particular, if there exists $n \in \bbN$ so that 
\[
(\phi_X(\J_X)\otimes \id)X^{\otimes n }\otimes_{\sigma}\H \neq X^{\otimes n}\otimes_{\sigma} \H.
\]
then by considering the Hilbert space $\K :=X^{\otimes n-1 }\otimes_{\sigma}\H$, we see that 
\[
\phi_X(\J_X)X\otimes_{\sigma}\K \neq X\otimes_{\sigma} \K.
\]
and so $\phi_X(\J_X)$ acts $\sigma$-degenerately on $X$.
\end{remark}

The following gives a quick example of a $\sigma$-degenerate action. Note that this is possibly stronger than having a not non-degenerate action.
\begin{proposition}
Let $(X, \C)$ be a $\ca$-correspondence. If $(\phi_X(\J_X)X)^{\perp}\neq \{0 \}$, then $\phi_X(\J_X)$ acts $\sigma$-degenerately on $X$.
\end{proposition}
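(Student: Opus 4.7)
The strategy is to promote a nonzero vector in $(\phi_X(\J_X)X)^{\perp}$ (viewed inside the Hilbert $\C$-module $X$) to a nonzero vector in the tensored Hilbert space that witnesses the $\sigma$-degenerate behavior.

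By hypothesis, fix $y\in (\phi_X(\J_X)X)^{\perp}\setminus\{0\}$, so that $\sca{y,\phi_X(c)x}=0$ in $\C$ for every $c\in \J_X$ and every $x\in X$. Since $y\neq 0$, the positive element $\sca{y,y}\in \C$ is nonzero, and therefore there exists a nondegenerate $*$-representation $\sigma\colon \C\rightarrow B(\H)$ with $\sigma(\sca{y,y})\neq 0$ (e.g., take the universal representation, or any GNS representation of a state that does not vanish on $\sca{y,y}$). Pick $\xi\in \H$ with $\sigma(\sca{y,y})\xi\neq 0$; then by the defining inner product on the interior tensor product,
\[
\nor{y\otimes \xi}^2 \;=\; \sca{\xi,\sigma(\sca{y,y})\xi} \;>\; 0,
\]
so $y\otimes\xi$ is a nonzero vector of $X\otimes_{\sigma}\H$.

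Next I verify that $y\otimes\xi$ is orthogonal to every elementary tensor in $\phi_X(\J_X)X\otimes_{\sigma}\H$. For $c\in \J_X$, $x\in X$ and $\eta\in\H$,
\[
\sca{\phi_X(c)x\otimes\eta,\;y\otimes\xi} \;=\; \sca{\eta,\sigma\bigl(\sca{\phi_X(c)x,y}\bigr)\xi} \;=\; \sca{\eta,\sigma\bigl(\sca{y,\phi_X(c)x}^{*}\bigr)\xi} \;=\; 0,
\]
by the choice of $y$. Hence $y\otimes\xi$ lies in the orthogonal complement of the linear span of $\phi_X(\J_X)X\otimes_{\sigma}\H$, and in particular is not in its closure. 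Since $y\otimes\xi\in X\otimes_{\sigma}\H$ itself, this yields
\[
\phi_X(\J_X)X\otimes_{\sigma}\H \;\neq\; X\otimes_{\sigma}\H,
\]
which is exactly the statement that $\phi_X(\J_X)$ acts $\sigma$-degenerately on $X$.

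There is no real obstacle here; the only point requiring a little care is to ensure the vector $y\otimes\xi$ is genuinely nonzero in the balanced tensor product, which is why we must choose $\sigma$ so as not to annihilate the positive element $\sca{y,y}\in \C$. Everything else is a direct use of the definition of the Hilbert-space inner product on $X\otimes_{\sigma}\H$ together with the orthogonality property of $y$.
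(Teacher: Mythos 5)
Your argument is correct and is essentially the paper's own proof: both pick a nonzero $f$ in $(\phi_X(\J_X)X)^{\perp}$, choose $\sigma$ and a vector so that $f\otimes h$ has nonzero norm in $X\otimes_{\sigma}\H$, and then check orthogonality to $\phi_X(\J_X)X\otimes_{\sigma}\H$ via the interior tensor product inner product. You merely write out explicitly the orthogonality computation that the paper leaves as ``a similar calculation.''
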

\begin{proof}
Let $0 \neq f \in (\phi_X(\J_X)X)^{\perp}$. Let $\sigma : \C\rightarrow B(\H)$ be a $*$-representation and $h \in \H$ so that $\sigma \big( \langle f , f \rangle^{1/2}\big)h \neq 0$.
Then,
\[
\langle f\otimes_{\sigma}h ,  f\otimes_{\sigma}h \rangle= \langle h , \sigma (( \langle f , f \rangle)h \rangle = \| \sigma \big( \langle f , f \rangle^{1/2}\big)h\|\neq0.
\]
A similar calculation shows that $$0 \neq f\otimes_{\sigma}h  \in (\phi_X(\J_X)X \otimes_{\sigma}\H)^{\perp}$$ and we are done.
\end{proof}

We need the following 

\begin{lemma} \label{l;use1}
Let $(X, \C)$ be a $\ca$-correspondence and $(\rho, t)$ an isometric representation of $(X, \C)$ on $\H$. 
\begin{enumerate}
\item If $\M\subseteq \H$ is an invariant subspace for $(\rho\rtimes t)(\T_X^+)$, then the restriction $(\rho_{\mid_{\M}}, t_{\mid_{\M}})$ of $(\rho, t)$ on $\M$ is an isometric representation.

\item If $\rho(c)h = \psi_t(\phi_X(c))h,$ for all $c \in \J_X$ and $h \in [t(X)\H]^{\perp}$, then $(\rho, t)$ is a Cuntz-Pimsner representation.
 \end{enumerate}
\end{lemma}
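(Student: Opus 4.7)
The plan is to handle the two parts separately, with part (ii) being the substantive one.

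For part (i), note that $\T_X^+$ is norm-generated by $\rho(\C)$ and $t(X)$, so the invariance of $\M$ under $(\rho\rtimes t)(\T_X^+)$ reduces to invariance under each $\rho(c)$ and each $t(x)$. Since $\C$ is $*$-closed, invariance under all of $\rho(\C)$ automatically forces invariance under $\rho(c)^*=\rho(c^*)$, so $\M$ is in fact reducing for $\rho$ and $\rho|_\M$ is a bona fide $*$-representation. I would then verify that the compressions $t|_\M(x)=P_\M t(x)P_\M=t(x)P_\M$ satisfy the isometric representation axioms by direct computation, using repeatedly that invariance gives $P_\M t(x')P_\M=t(x')P_\M$ and $P_\M\rho(c)P_\M=\rho(c)P_\M$. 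For the required non-degeneracy of $\rho|_\M$, pick an approximate identity $(e_\alpha)$ in $\C$: since $\rho(e_\alpha)\to I$ strongly on $\H$ and $\M$ is invariant, one gets $\rho(e_\alpha)h\to h$ for every $h\in\M$, hence $\overline{\rho|_\M(\C)\M}=\M$.

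For part (ii), I would decompose $\H=[t(X)\H]\oplus[t(X)\H]^\perp$ and verify the Cuntz--Pimsner identity $\psi_t(\phi_X(c))=\rho(c)$ separately on each summand, for every $c\in\J_X$. The orthogonal complement is handled by hypothesis. On $[t(X)\H]$ the engine is the standard computation
\[
\psi_t(\theta_{y,z})\,t(x)=t(y)\,t(z)^*t(x)=t(y)\,\rho(\langle z,x\rangle)=t\bigl(\theta_{y,z}(x)\bigr),
\]
which, by linearity and norm continuity, extends to $\psi_t(k)\,t(x)=t(k(x))$ for every $k\in\K(X)$. Taking $k=\phi_X(c)$ (which lies in $\K(X)$ precisely because $c\in\J_X$) and comparing with the covariance relation $\rho(c)t(x)=t(\phi_X(c)x)$ yields $\psi_t(\phi_X(c))\,t(x)h=\rho(c)\,t(x)h$ for all $x\in X$ and $h\in\H$. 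Linearity and continuity then push this equality to all of $[t(X)\H]$, and combining with the hypothesis on $[t(X)\H]^\perp$ gives $\psi_t(\phi_X(c))=\rho(c)$ on all of $\H$.

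No step here presents a serious obstacle; everything reduces to routine manipulation of the defining identities of an isometric representation. The only conceptual point worth highlighting is that in part (ii) the hypothesis supplies exactly the missing information on the orthogonal complement, because on $[t(X)\H]$ the Cuntz--Pimsner identity is automatic for \emph{any} isometric representation.
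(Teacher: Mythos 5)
Your proposal is correct and follows essentially the same route as the paper: in (i) the key point is that invariance under the self-adjoint set $\rho(\C)$ makes $\M$ reducing for $\rho$, so the compression is a genuine $*$-representation and the isometry relation passes to $\M$; in (ii) the identity $\psi_t(K)t(x)=t(Kx)$ on rank-one (hence all compact) operators handles $[t(X)\H]$, while the hypothesis handles its complement. The only difference is that you spell out the non-degeneracy of $\rho|_\M$ via an approximate identity, a detail the paper leaves implicit.
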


\begin{proof}
(i) If $p$ is the orthogonal projection on $\M$, then $p$ commutes with $\rho(\C)$ and so $\rho_{\mid_{\M}}(\cdot)= p\rho (\cdot) p$ is a $*$-representation of $\C$. 

Furthermore, for $x, y \in X$, we have
\begin{align*}
t_{\mid_{\M}}(x)^*t_{\mid_{\M}}(y) &= pt(x)^*pt(y)p \\
&=pt(x)^*t(y)p \\
             &= p\rho(\langle x, y\rangle ) p= \rho_{\mid_{\M}}(\langle x, y\rangle)
             \end{align*}
             and the conclusion follows.

(ii) It is easy to see on rank-one operators and therefore by linearity and continuity on all compact operators $K \in \K(X)$ that 
\[
t(Kx)=\psi_t(K)t(x), \quad x \in X.
\]
Now if $c \in \J_X$, then for any $x \in X$ and $ h \in \H$ we have
\[
\rho(c)t(x)h= t(\phi_X(c)x)h =\psi_t(\phi_X(c))t(x)h.
\]            
By assumption $ \rho(c)h = \psi_t(\phi_X(c))h$, for any $h \in [t(X)\H]^{\perp}$ and the conclusion follows.  \end{proof}

\begin{theorem} \label{thm;converse}
Let $(X, \C)$ be a $\ca$-correspondence. If Katsura's ideal $\J_X$ acts $\sigma$-degenerately on $X$ then the tensor algebra $\T^+_X$ is not hyperrigid.
\end{theorem}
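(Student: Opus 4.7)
The strategy is to exhibit a non-degenerate $*$-representation $\tau\colon \O_X \to B(\mathcal{K})$ together with an isometric representation $(\rho',t')$ of $(X,\C)$ on a strictly larger Hilbert space $\mathcal{K}' \supsetneq \mathcal{K}$ with $\mathcal{K}$ invariant but not reducing for $t'(X)$. This will produce a non-trivial dilation of $\tau|_{\T_X^+}$, whence $\tau|_{\T_X^+}$ fails to be maximal and $\T_X^+$ is not hyperrigid by \cite[Proposition 2.4]{DorSal}.

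Fix $\sigma\colon \C \to B(\H)$ witnessing $\sigma$-degeneracy, and set
\[
\mathcal{Q} := (X \otimes_\sigma \H) \ominus \ol{\phi_X(\J_X) X \otimes_\sigma \H} \neq \{0\}.
\]
Because $\J_X$ is a closed $*$-ideal, the left action $\phi_X(c)\otimes \id_\H$ on $X\otimes_\sigma \H$ preserves $\ol{\phi_X(\J_X)X \otimes_\sigma \H}$ (together with its $c^*$-counterpart), so it descends to a non-degenerate $*$-representation $\rho_\mathcal{Q}\colon \C \to B(\mathcal{Q})$ with $\rho_\mathcal{Q}(\J_X)=0$. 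Form the Fock representation $(\rho, t)$ of $(X,\C)$ on $\mathcal{K} := \bigoplus_{n=0}^\infty X^{\otimes n} \otimes_{\rho_\mathcal{Q}} \mathcal{Q}$. Its vacuum is $\mathcal{K}^{(0)} = \mathcal{Q}$; since $\rho_\mathcal{Q}(c)|_{\mathcal{Q}} = 0$ and $\psi_t(\phi_X(c))|_{\mathcal{Q}} = 0$ for $c\in \J_X$, Lemma~\ref{l;use1}(ii) ensures $(\rho,t)$ is Cuntz-Pimsner covariant, so $\tau := \rho\rtimes t$ is a non-degenerate $*$-representation of $\O_X$ on $\mathcal{K}$.

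For the dilation, set $\mathcal{M} := \F(X) \otimes_\sigma \H$ with its standard Fock representation $(\rho_F,t_F)$, $\mathcal{K}' := \mathcal{K} \oplus \mathcal{M}$, $\rho'(c) := \rho(c) \oplus \rho_F(c)$, and declare $t'(x)$ to act as $t(x)$ on $\mathcal{K}$, as $t_F(x)$ on $\bigoplus_{n \geq 1} X^{\otimes n} \otimes_\sigma \H$, and on the level-zero $\H \subseteq \mathcal{M}$ by
\[
t'(x)h := P_\mathcal{Q}(x \otimes h) \oplus P_{\mathcal{Q}^\perp}(x \otimes h) \in \mathcal{Q} \oplus (X \otimes_\sigma \H) \subseteq \mathcal{K} \oplus \mathcal{M}.
\]
The Toeplitz identity $t'(x)^* t'(y) = \rho'(\langle x,y\rangle)$ and the covariance $\rho'(c)t'(x) = t'(\phi_X(c) x)$ follow from the Pythagorean identity $\|P_\mathcal{Q}\xi\|^2 + \|P_{\mathcal{Q}^\perp}\xi\|^2 = \|\xi\|^2$ on $X \otimes_\sigma \H$, together with the block-diagonality of $\phi_X(c) \otimes \id_\H$ relative to $\mathcal{Q} \oplus \mathcal{Q}^\perp$; so $(\rho',t')$ gives a c.c.\ representation of $\T_X^+$ on $\mathcal{K}'$. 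Clearly $\mathcal{K}$ is invariant for $t'(X)$ and the compression to $\mathcal{K}$ restores $\tau|_{\T_X^+}$; however, $\mathcal{K}$ is not reducing, since for $k \in \mathcal{Q}$ and $h \in \H$ one computes $\langle t'(x)^* k, h\rangle_{\mathcal{K}'} = \langle k, P_\mathcal{Q}(x \otimes h)\rangle$, which is nonzero for a suitable choice of $k,x,h$ by $\sigma$-degeneracy (for instance, $k = P_\mathcal{Q}(x \otimes h)$).

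The main technical point is the verification of the Toeplitz identity on the level-zero $\H \subseteq \mathcal{M}$, where the two components $P_\mathcal{Q}(x \otimes h)$ and $P_{\mathcal{Q}^\perp}(x \otimes h)$ must recombine via Pythagoras to reproduce $\sigma(\langle x, y\rangle)$; cross-terms between $\mathcal{K}$ and the higher Fock levels of $\mathcal{M}$ vanish thanks to the level grading of the Fock structure.
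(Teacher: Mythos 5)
Your argument is correct, and its core coincides with the paper's: both proofs extract from $\sigma$-degeneracy the nonzero subspace $\mathcal{Q}=(\phi_X(\J_X)X\otimes_{\sigma}\H)^{\perp}$ (the paper's $\M_0$), observe that $\C$ acts on it non-degenerately while $\J_X$ annihilates it, induce the Fock representation over it, check Cuntz--Pimsner covariance via Lemma~\ref{l;use1}(ii), and conclude through \cite[Proposition 2.4]{DorSal}. Where you diverge is the construction of the non-trivial dilation. The paper gets it essentially for free: since $\mathcal{Q}\subseteq X\otimes_{\sigma}\H$, your space $\mathcal{K}=\bigoplus_{n}X^{\otimes n}\otimes\mathcal{Q}$ is canonically the invariant subspace $\M=0\oplus\M_0\oplus(X\otimes\M_0)\oplus\cdots$ of the full Fock representation on $\F(X)\otimes_{\sigma}\H$, so that ambient representation itself is the dilation, with non-reducibility read off from $t_0(X)\M^{\perp}\not\subseteq\M^{\perp}$. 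You instead build an external dilation on $\mathcal{K}\oplus\F(X)\otimes_{\sigma}\H$ with a hand-modified creation operator on the vacuum of the second summand, which obliges you to re-verify the Toeplitz identity and left covariance; your Pythagoras/block-diagonality checks do go through, and your non-reducing computation $\langle t'(x)^{*}k,h\rangle=\langle k,P_{\mathcal{Q}}(x\otimes h)\rangle\neq 0$ (possible exactly because $\mathcal{Q}\neq\{0\}$) is sound. The cost of your route is this extra verification plus a redundant, unused copy of $\mathcal{Q}$ in level one of your $\mathcal{M}$; what it buys is nothing beyond independence from the (easy) observation that $\mathcal{K}$ embeds in $\F(X)\otimes_{\sigma}\H$. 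One step you should make explicit: $\rho_{\mathcal{Q}}(\J_X)=0$ does not follow formally from ``$\J_X$ is an ideal preserving the subspace''; it is the computation $\|(\phi_X(j)\otimes I)q\|^{2}=\langle q,(\phi_X(j^{*}j)\otimes I)q\rangle=0$ for $q\perp\phi_X(\J_X)X\otimes_{\sigma}\H$, which is exactly the paper's equation (\ref{eq;use1}).
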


\begin{proof}
Let $\sigma \colon \C \rightarrow B(\H)$ so that 
\[
\phi_X(\J_X)X\otimes_{\sigma}\H \neq X\otimes_{\sigma} \H
\]
and let $ \M_0:= (\phi_X(\J_X)X\otimes_{\sigma}\H)^{\perp}$.

We claim that
\begin{equation} \label{eq;use1}
(\phi_X(\J_X)\otimes I)\M_0=\{ 0\}.
\end{equation}
Indeed for any $f \in \M_0$ and $j \in \J_X$ we have 
\begin{align*}
\big \langle (\phi_X(j)\otimes I)f \, , \, (\phi_X(j)\otimes I)f \big \rangle =\langle f, (\phi_X(j^*j)\otimes I) f\rangle = 0
\end{align*}
since $f \in (\phi_X(\J_X)X\otimes_{\sigma}\H)^{\perp}$. This proves the claim.

We also claim that 
\begin{equation} \label{eq;use2}
(\phi_X(\C)\otimes I)\M_0 = \M_0.
\end{equation}
Indeed this follows from the fact that 
\[(\phi_X(\C)\otimes I)(\phi_X(\J_X)X\otimes_{\sigma}\H) = \phi_X(\J_X)X\otimes_{\sigma}\H,
\]
which is easily verified.

Using the subspace $\M_0$ we produce a Cuntz-Pimsner representation $(\rho, t)$ of $(X, \C)$ as follows. Let $(\rho_{\infty}, t_{\infty})$ be the universal representation of $(X, \C)$ on the Fock space $\F(X)=\oplus_{n =0}^{\infty} X^{\otimes n}$, $X^{\otimes 0}:=\C$. Let 
\begin{align*}
\rho_0 &\colon \C \longrightarrow B(\F(X)\otimes_{\sigma} \H );  c \longmapsto \rho_{\infty}(c)\otimes I \\
t_0 &\colon X \longrightarrow B(\F(X)\otimes_{\sigma} \H ) ;  x \longmapsto t_{\infty}(x)\otimes I.
\end{align*}
Define
\begin{align*}
\M:&= 0\oplus \M_0 \oplus (X\otimes \M_0) \oplus (X^{\otimes 2}\otimes  \M_0)  \oplus\dots \\
&=(\rho_0\rtimes t_0)(\T_X^+)(0 \oplus\M_0 \oplus 0 \oplus 0\oplus \dots )\subseteq \F(X)\otimes_{\sigma}\H,
\end{align*}
with the second equality following from (\ref{eq;use2}). Clearly, $\M$ is an invariant subspace for $(\rho_0\rtimes t_0)(\T_X^+)$. 

Let $\rho:={\rho_0}_{\mid_{\M}}$ and $t:={t_0}_{\mid_{\M}}$. By Lemma~\ref{l;use1}(i), $(\rho, t)$ is a representation of $(X, \C)$. We claim that $(\rho, t)$ is actually Cuntz-Pimsner.

Indeed by Lemma~\ref{l;use1}(ii) it suffices to examine whether $\psi_t(\phi_X(j))h = \rho(j)h$, for any $h \in \M \ominus t(X)\M$. Note that since
\[
t(X)\M= 0 \oplus 0 \oplus (X\otimes \M_0 ) \oplus  (X^{\otimes 2}\otimes  \M_0)  \oplus ... ,
\]
we have that 
\[
\M \ominus t(X)\M = 0 \oplus \M_0 \oplus 0 \oplus  0\oplus \dots.
\]
From this it follows that for any $h \in \M \ominus t(X)\M$ we have 
\[
t_0(x)^*h \in  (\C\otimes_{\sigma}\H) \oplus 0 \oplus 0\oplus ... ,\quad x \in X
\]
 and so in particular for any $j \in \J_X$ we obtain
 $$\psi_t(\phi_X(j))h  \in {t_0}_{\mid_{\M}}(X)({t_0}_{\mid_{\M}})(X)^*h = \{0\}.$$
 On the other hand, 
 \[
 \rho(j)h \in 0\oplus (\phi_X(\J_X)\otimes I)\M_0\oplus 0\oplus 0\oplus\dots =\{ 0\},
 \]
 because of (\ref{eq;use2}). Hence $(\rho, t)$ is Cuntz-Pimsner.
 
 At this point by restricting on $\T_X^+$, we produce the representation $\rho \rtimes t \mid_{\T_X^+}$ of $\T_X^+$ coming from a $*$-representation of its $\ca$-envelope $\O_X$, which admits a dilation, namely $\rho_0 \rtimes t_0 \mid_{\T_X^+}$. If we show now that $\rho_0 \rtimes t_0 \mid_{\T_X^+}$ is a non-trivial dilation of $\rho \rtimes t \mid_{\T_X^+}$, i.e. $\M_0$ is not reducing for $(\rho_0\rtimes t_0)(\T_X^+)$, then $\rho \rtimes t \mid_{\T_X^+}$ is not a maximal representation of $\T^+_X$.  Proposition 2.4~\cite{DorSal} shows $\rho \rtimes t \mid_{\T_X^+}$ does not have the UEP and so $\T_X^+$ is not hyperrigid, as desired.
 
 Towards this end, note that 
\[
\M^{\perp} = \C \oplus (\phi_X(\J_X)X\otimes_{\sigma}\\H )\oplus (X\otimes \M_0 )^{\perp}\oplus \dots
\]
and so
\begin{align*}
t_0(X)\M^{\perp}=0\oplus (X \C\otimes_{\sigma}\H)\oplus 0 \oplus0 \oplus \dots
 \nsubseteq \M^{\perp}
\end{align*}
Therefore $\M^{\perp}$ is not an invariant subspace for $(\rho_0\rtimes t_0)(\T_X^+)$ and so $\M$ is not a reducing subspace for $(\rho_0\rtimes t_0)(\T_X^+)$. This completes the proof.
 \end{proof}

\section{Topological graphs}

A broad class of $\ca$-correspondences arises naturally from the concept of a topological graph. For us, a topological graph $G= (G^0, G^1, r , s)$ consists of two $\sigma$-locally compact spaces $G^0$, $G^1$, a continuous proper map $r: G^1 \rightarrow G^0$ and a local homeomorphism $s: G^1 \rightarrow G^0$. The set $G^0$ is called the base (vertex) space and $G^1$ the edge space. When $G^0$ and $G^1$ are both equipped with the discrete topology, we have a discrete countable graph.

With a given topological graph $G= (G^0, G^1, r , s)$ we associate a $\ca$-correspondence $X_{G}$ over $C_0(G^0)$. The right and left actions of $C_0(G^0 )$ on $C_c ( G^1)$ are given by
\[
(fFg)(e)= f(r(e))F(e)g(s(e))
\]
for $F\in C_c (G^1)$, $f, g \in C_0(G^0)$ and $e \in G^1$. The inner product is defined for $F, H \in C_c ( G^1)$ by
\[
\left< F \, | \, H\right>(v)= \sum_{e \in s^{-1} (v)} \overline{F(e)}H(e)
\]
for $v \in G^0$. Finally, $X_{G}$ denotes the completion of $C_c ( G^1)$ with respect to the norm
\begin{equation} \label{norm}
\|F\| = \sup_{v \in G^0} \left< F \, | \, F\right>(v) ^{1/2}.
\end{equation}

When $G^0$ and $G^1$ are both equipped with the discrete topology, then the tensor algebra $\T_{G}^+ \equiv \T^+_{X_{G}}$ associated with $G$ coincides with the quiver algebra of Muhly and Solel \cite{MS}. See \cite{Raeburn} for further reading.

Given a topological graph  $G= (G^0, G^1, r, s)$, we can describe the ideal $\J_{X_G}$ as follows. Let 
\begin{align*}
G^0_{\sce}&= \{v \in G^0\mid v \mbox{ has a neighborhood } V \mbox{ such that } r^{-1}(V) = \emptyset \} \\
G^0_{\fin}&= \{v \in G^0\mid v \mbox{ has a neighborhood } V \mbox{ such that } r^{-1}(V) \mbox{ is compact} \} 
\end{align*}  
Both sets are easily seen to be open and in \cite[Proposition 1.24]{Katsura} Katsura shows that 
\[
\ker\phi_{X_G}= C_0 (G^0_{\sce})\,\, \mbox{ and }\,\, \phi_{X_G}^{-1}(\K(X_G)) =C_0(G^0_{\fn}).
\]
From the above it is easy to see that $\J_{X_G}=C_0(G^0_{\reg})$, where 
\[
G^0_{\reg}:= G^0_{\fin} \backslash \overline{G^0_{\sce}}.
\]

We need the following 
\begin{lemma} \label{l;toppled}
Let $G= (G^0, G^1, r, s)$ be a topological graph. Then $r^{-1}\big( G^0_{\reg} \big)=G^1$ if and only if $r : G^1 \rightarrow G^0$ is a proper map satisfying $r(G^1) \subseteq \big(\overline{r(G^1)}\big)^{\circ}$.
\end{lemma}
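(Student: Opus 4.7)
The plan is to reduce both sides of the asserted equivalence to the same concrete statement about the image $r(G^1)$. First I will use the fact that $r$ is proper by definition of a topological graph, combined with the local compactness of $G^0$, to conclude that $G^0_{\fin}=G^0$: any $v\in G^0$ has a compact neighborhood $K$, and then $r^{-1}(K)$ is compact by properness, so $v\in G^0_{\fin}$. With this observation the definition of $G^0_{\reg}$ collapses to $G^0_{\reg}=G^0\setminus \overline{G^0_{\sce}}$.

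Next I will identify $G^0_{\sce}$ explicitly. Directly from the definition, $v\in G^0_{\sce}$ iff $v$ has a neighborhood disjoint from $r(G^1)$, which is precisely the statement $v\notin \overline{r(G^1)}$. Hence $G^0_{\sce}=G^0\setminus \overline{r(G^1)}$ is open, and the elementary point-set identity $\overline{U}=G^0\setminus (G^0\setminus U)^{\circ}$, valid for any open $U\subseteq G^0$, applied to $U=G^0_{\sce}$ yields
\[
\overline{G^0_{\sce}}=G^0\setminus \big(\overline{r(G^1)}\big)^{\circ}.
\]
Consequently $G^0_{\reg}=\big(\overline{r(G^1)}\big)^{\circ}$.

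With this identification in hand, the equivalence is transparent: $r^{-1}(G^0_{\reg})=G^1$ says exactly that $r(G^1)\subseteq G^0_{\reg}=\big(\overline{r(G^1)}\big)^{\circ}$. Properness of $r$ is part of the standing hypothesis on $G$ and therefore appears on both sides of the equivalence for free, so no separate argument is needed for that clause. The only step that is not a pure unwinding of definitions is the set-theoretic calculation of $\overline{G^0_{\sce}}$, but this is a one-line consequence of the definition of closure and presents no real obstacle; the whole lemma is essentially a bookkeeping exercise built on the explicit description of $\J_{X_G}$ from \cite[Proposition 1.24]{Katsura}.
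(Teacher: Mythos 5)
Your identification of $G^0_{\sce}$ with $G^0\setminus\overline{r(G^1)}$ and the resulting computation $\overline{G^0_{\sce}}=G^0\setminus\big(\overline{r(G^1)}\big)^{\circ}$ are correct, and this is exactly how the paper disposes of the second half of the equivalence, i.e.\ $r^{-1}(\overline{G^0_{\sce}})=\emptyset$ iff $r(G^1)\subseteq\big(\overline{r(G^1)}\big)^{\circ}$. The gap is in how you dispose of the first half. You declare the properness clause vacuous by appealing to the word ``proper'' in the paper's definition of a topological graph, so that $G^0_{\fin}=G^0$ and $G^0_{\reg}$ collapses to $\big(\overline{r(G^1)}\big)^{\circ}$. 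That literal reading cannot be the intended one: the lemma itself, and condition (iii) of the main theorem, present properness of $r$ as a property to be \emph{characterized}, and in the discrete case properness of $r$ is precisely row-finiteness of the graph, which the introduction (citing Dor-On--Salomon) identifies as the nontrivial dividing line for hyperrigidity. If properness were built into the definition, every discrete graph would be row-finite and the theorem would be empty. The occurrence of ``proper'' in the definition is a slip; following Katsura's convention, $r$ is only assumed continuous, and the lemma must be read that way.

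Under that reading your proposal omits the substantive half of the lemma, namely the equivalence $r^{-1}(G^0_{\fin})=G^1$ iff $r$ is proper. The direction you do supply (properness plus local compactness of $G^0$ gives $G^0_{\fin}=G^0$) is the easy one. The converse requires an argument: if $r^{-1}(G^0_{\fin})=G^1$, then every point of $r(G^1)$ lies in $G^0_{\fin}$, so given a compact $K$ one covers it by finitely many compact neighborhoods $V_{x_i}$ with $r^{-1}(V_{x_i})$ compact and exhibits $r^{-1}(K)$ as a closed subset of $\bigcup_i r^{-1}(V_{x_i})$. This covering argument is the first half of the paper's proof and is the only step of the lemma that is more than bookkeeping --- your closing remark that the whole lemma is ``a pure unwinding of definitions'' is exactly backwards on this point. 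Be aware also that this direction is genuinely delicate: the covering argument only directly controls preimages of compact subsets of $r(G^1)$, and points of $\overline{r(G^1)}\setminus r(G^1)$ need not lie in $G^0_{\fin}$ (consider $G^0=\bbR$, $G^1=(0,1)$, $r$ the inclusion), so a complete proof of the forward implication has to address such compact sets as well; in any event, that is the implication your proposal must actually prove rather than assume.
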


\begin{proof}Notice that 
\[
r^{-1}(G^0_{\reg})=r^{-1}(G^0_{\fn}) \cap r^{-1}(\overline{G^0_{\sce}})^c
\]
and so $r^{-1}\big( G^0_{\reg} \big)=G^1$ is equivalent to $r^{-1}(G^0_{\fn}) =r^{-1}(\overline{G^0_{\sce}})=G^1$

First we claim that $r^{-1}(G^0_{\fn}) = G^1$ if and only if $r$ is a proper map.
Indeed, assume that $r^{-1}(G^0_{\fn}) = G^1$ and let $K\subseteq r(G^1)$ compact in the relative topology. For every $x \in K$, let $V_x$ be a compact neighborhood of $x$ such that $r^{-1}(V_x)$ is compact and so $r^{-1}(V_x \cap K))$ is also compact. By compactness, there exist $x_1, x_2, \dots , x_n \in K$ so that $K =  \cup_{i=1}^{n}(V_{x_i} \cap K)$ and so $$r^{-1} (K) =  \cup_{i=1}^{n}r^{-1}(V_{x_i} \cap K)$$ and so $r^{-1} (K)$ is compact.

Conversely, if $r$ is proper then any compact neighborhood V of any point in $G^0$ is inverted by $r^{-1}$ to a compact set and so $r^{-1}(G^0_{\fn}) = G^1$.

We now claim that $r^{-1}(\overline{G^0_{\sce}})=\emptyset$ if and only $r(G^1) \subseteq \big(\overline{r(G^1)}\big)^{\circ}$.

Indeed, $e \in r^{-1}(\overline{G^0_{\sce}})$ is equivalent to $r(e) \in \overline{(r(G^1)^c)^{\circ}}$ and so 
$r^{-1}(\overline{G^0_{\sce}})=\emptyset$ is equivalent to 
\[
r(G^1)\subseteq \left(    \overline{(r(G^1)^c)^{\circ}}    \right)^c =\big(\overline{r(G^1)}\big)^{\circ},
\]
as desired.
\end{proof}

If $G= (G^0, G^1, r, s)$ is a topological graph and  $S \subseteq G^1 $, then $N(S)$ denotes the collection of continuous functions $F \in X_{G}$ with $F_{|S}=0$, i.e., vanishing at $S$. The following appears as Lemma~4.3(ii) in \cite{KatsLoc}.

 \begin{lemma} \label{topology}
Let $G= (G^0, G^1, r, s)$ be a topological graph.
If $S_1 \subseteq G^0$, $S_2 \subseteq G^1$ closed, then
 \[
 N(r^{-1}(S_1) \cup S_2)=\overline{\spn} \{(f\circ r) F\mid f_{|S_1} = 0, F_{|S_2} = 0\}
 \]
 \end{lemma}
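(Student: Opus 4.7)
My plan is to prove the two inclusions separately. The inclusion $\supseteq$ is immediate: whenever $f\in C_0(G^0)$ with $f|_{S_1}=0$ and $F\in X_G$ with $F|_{S_2}=0$, the product $(f\circ r)F$ vanishes on $r^{-1}(S_1)$ (since $f\circ r$ does) and on $S_2$ (since $F$ does), so it lies in the norm-closed subspace $N(r^{-1}(S_1)\cup S_2)$; hence the closed span on the right is contained in the left.

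For the reverse inclusion, I would proceed in two stages. First, I would reduce to $F\in C_c(G^1)$. Because $\phi_{X_G}\colon C_0(G^0)\to\L(X_G)$ is non-degenerate, $C_0(G^0)$ admits a compactly supported approximate unit $(e_\lambda)$ with $(e_\lambda\circ r)F\to F$ in $X_G$-norm for every $F$. If $F\in N(r^{-1}(S_1)\cup S_2)$, then each $(e_\lambda\circ r)F$ still vanishes on $r^{-1}(S_1)\cup S_2$ and, by properness of $r$, has support contained in the compact set $r^{-1}(\supp e_\lambda)$. Thus $C_c(G^1)\cap N(r^{-1}(S_1)\cup S_2)$ is norm-dense in $N(r^{-1}(S_1)\cup S_2)$.

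In the second stage, given $F\in C_c(G^1)$ vanishing on $r^{-1}(S_1)\cup S_2$ and $\epsilon>0$, I would produce a single element of the spanning set approximating $F$ within $\epsilon$. Since $s$ is a local homeomorphism, covering the compact set $\supp F$ by finitely many opens on which $s$ is injective yields a constant $N$ with $\#(s^{-1}(v)\cap\supp F)\leq N$ for every $v\in G^0$. Set $\delta=\epsilon/\sqrt{N}$. The set $L:=\{e\in\supp F : |F(e)|\geq\delta\}$ is compact and disjoint from $r^{-1}(S_1)$, so $r(L)$ is compact and disjoint from the closed set $S_1$. Urysohn's lemma in the locally compact Hausdorff space $G^0$ then yields $f\in C_c(G^0\setminus S_1)\subseteq C_0(G^0)$ with $0\leq f\leq 1$ and $f\equiv 1$ on $r(L)$; in particular $f|_{S_1}=0$. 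The function $(1-f\circ r)F$ is supported on edges $e\in\supp F$ with $f(r(e))<1$, hence $e\notin L$ and $|F(e)|<\delta$; combined with the fiber bound this gives
\[
\|F-(f\circ r)F\|_{X_G}^{2}=\sup_{v\in G^0}\sum_{e\in s^{-1}(v)}|1-f(r(e))|^{2}\,|F(e)|^{2}\leq N\delta^{2}=\epsilon^{2}.
\]
Writing $(f\circ r)F=(f\circ r)F'$ with $f|_{S_1}=0$ and $F':=F$ satisfying $F'|_{S_2}=0$, this element lies in the spanning set on the right, which completes the approximation.

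The main difficulty I anticipate is reconciling the $X_G$-norm -- a supremum of fiber $\ell^{2}$-sums -- with pointwise smallness of $F$. This is exactly where the local homeomorphism hypothesis on $s$ does its work, providing a uniform fiber-cardinality bound on compact subsets of $G^1$ and so converting pointwise smallness of $F$ into norm smallness in $X_G$. The set $S_2$ plays an essentially passive role: vanishing on $S_2$ is preserved by all multiplications performed, and all genuine analytic content concerns separating $F$ from $r^{-1}(S_1)$.
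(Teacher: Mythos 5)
Your proof is correct and complete. Note that the paper does not actually prove this lemma --- it is quoted verbatim from Lemma~4.3(ii) of \cite{KatsLoc} --- so there is no in-paper argument to compare against; what you have written is a sound, self-contained proof of the cited result, and it follows the standard pattern one would expect: the easy inclusion from the fact that $\|\cdot\|_{X_G}$ dominates the sup norm (so $N(\cdot)$ is closed and membership is checked pointwise), then a density reduction to $C_c(G^1)$, then an Urysohn function on $G^0$ separating the ``large'' part of $F$ from $S_1$, with the local-homeomorphism hypothesis on $s$ supplying the uniform bound $\#\big(s^{-1}(v)\cap\supp F\big)\le N$ that converts pointwise smallness into $X_G$-norm smallness. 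Two points are worth making explicit if you write this up. First, your reduction to compactly supported elements uses that $r^{-1}(\supp e_\lambda)$ is compact, i.e.\ properness of $r$; this is licensed by the present paper's definition of a topological graph (which builds properness of $r$ in), but it is not part of Katsura's standard definition, so the dependence should be flagged. Second, your claim that $(e_\lambda\circ r)F\to F$ is cleanest if you take the concrete approximate unit indexed by compact subsets $K\subseteq G^0$ with $e_K\equiv 1$ on $K$, since then $(e_K\circ r)F_0=F_0$ exactly for $F_0\in C_c(G^1)$ once $K\supseteq r(\supp F_0)$, and a $2\varepsilon$-argument finishes; this avoids any appeal to Cohen factorization. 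With those small clarifications the argument is airtight.
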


\begin{theorem} Let $G =(G^0, G^1, r, s)$ be a topological graph and let $X_G$ the $\ca$-correspondence associated with $G$. Then the following are equivalent
\begin{itemize}
\item[(i)] the tensor algebra $\T_{X_G}^+$ is hyperrigid 
\item[(ii)] $\phi(\J_{X_G})$ acts non-degenerately on $X_G$
\item[(iii)] $r: G^1 \rightarrow G^0$ is a proper map satisfying $ r(G^1) \subseteq \big(\overline{r(G^1)}\big)^{\circ}$
\end{itemize}
\end{theorem}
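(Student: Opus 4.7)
The plan is to prove the cycle (ii) $\Leftrightarrow$ (iii), (ii) $\Rightarrow$ (i), and (i) $\Rightarrow$ (ii) (the last via the contrapositive and Theorem~\ref{thm;converse}), the final implication carrying the essentially new content. For (ii) $\Leftrightarrow$ (iii), I would translate non-degeneracy into pointwise topological data on the graph. Since $\J_{X_G}=C_0(G^0_{\reg})$ and the left action on a $C_c$-function is $\phi_{X_G}(f)F=(f\circ r)F$, applying Lemma~\ref{topology} with $S_1=(G^0_{\reg})^c$ (closed, because $G^0_{\reg}$ is open) and $S_2=\emptyset$ identifies
\[
\overline{\phi_{X_G}(\J_{X_G})X_G} = N\bigl(G^1\setminus r^{-1}(G^0_{\reg})\bigr).
\]
Local compactness of $G^1$ yields, for any $e\in G^1$, an $F\in C_c(G^1)\subseteq X_G$ with $F(e)\neq 0$, so $N(S)=X_G$ if and only if $S=\emptyset$. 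Consequently (ii) is equivalent to $r^{-1}(G^0_{\reg})=G^1$, and Lemma~\ref{l;toppled} delivers (iii).

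The implication (ii) $\Rightarrow$ (i) is a direct application of Theorem~\ref{thm:hyperrigid}, once one observes that $\sigma$-local compactness of $G^1$ makes $X_G$ countably generated as a right Hilbert $C_0(G^0)$-module.

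The central step is (i) $\Rightarrow$ (ii), which I would prove contrapositively by constructing a $\sigma$-degenerate action and then invoking Theorem~\ref{thm;converse}. If (ii) fails, then by the first step there is $e_0\in G^1$ with $r(e_0)\notin G^0_{\reg}$. The natural test representation is the one-dimensional point-evaluation
\[
\sigma\colon C_0(G^0)\longrightarrow \bbC,\quad f\longmapsto f(s(e_0)),
\]
so that the inner product on $X_G\otimes_{\sigma}\bbC$ reduces to the discrete sum $\langle F,H\rangle=\sum_{e\in s^{-1}(s(e_0))}\overline{F(e)}H(e)$. The key geometric input is that $s$ is a local homeomorphism, which forces the fiber $s^{-1}(s(e_0))$ to be discrete; I can therefore fix an open neighborhood $U$ of $e_0$ with $U\cap s^{-1}(s(e_0))=\{e_0\}$ and pick $H\in C_c(G^1)$ with $\supp H\subseteq U$ and $H(e_0)=1$. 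Then $\|H\otimes_{\sigma} 1\|^2=1$, while for every $f\in C_0(G^0_{\reg})$ and $F\in X_G$ the sum defining $\langle (f\circ r)F\otimes 1,\,H\otimes 1\rangle$ collapses to its $e=e_0$ term, which vanishes because $f(r(e_0))=0$. Hence $H\otimes_\sigma 1$ is a nonzero vector orthogonal to $\phi_{X_G}(\J_{X_G})X_G\otimes_\sigma\bbC$.

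The hard part will be this last construction: exhibiting a single representation $\sigma$ that isolates the ``bad'' edge $e_0$ finely enough to produce a nonzero test vector in $X_G\otimes_\sigma\bbC$ orthogonal to $\phi_{X_G}(\J_{X_G})X_G\otimes_\sigma\bbC$. The local homeomorphism property of $s$ is precisely the ingredient that permits point-evaluation at $s(e_0)$ to single out $e_0$ inside its source-fiber; without this geometric control the natural Hilbert-module orthogonal complement of $\phi_{X_G}(\J_{X_G})X_G$ inside $X_G$ could already vanish (since $G^1\setminus r^{-1}(G^0_{\reg})$ may have empty interior), so one really does need the extra freedom provided by passing to $X_G\otimes_\sigma\bbC$.
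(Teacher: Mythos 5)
Your proposal is correct and follows essentially the same route as the paper: Lemma~\ref{topology} plus Lemma~\ref{l;toppled} for (ii)$\Leftrightarrow$(iii), Theorem~\ref{thm:hyperrigid} for (ii)$\Rightarrow$(i), and for (i)$\Rightarrow$(ii) the same point-evaluation representation $\sigma=\mathrm{ev}_{s(e_0)}$ at a bad edge $e_0\notin r^{-1}(G^0_{\reg})$ together with a test function isolating $e_0$ in its $s$-fiber, feeding into Theorem~\ref{thm;converse}. The only difference is cosmetic: you spell out (via the local-homeomorphism property of $s$ and the discreteness of the fiber) why such a test function exists, a point the paper simply asserts.
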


\begin{proof}
If $\phi(\J_{X_G})$ acts non-degenerately on $X_G$, then Theorem~\ref{thm:hyperrigid} shows that $\T_{X_G}^+$ is hyperrigid. Thus (ii) implies (i).

For the converse, assume that $\phi(\J_{X_G})$ acts degenerately on $X_G$. If we verify that $\phi(\J_{X_G})$ acts $\sigma$-degenerately on $X_G$, then Theorem~\ref{thm;converse} shows that   $\T_{X_G}^+$ is not hyperrigid and so (i) implies (ii).

Towards this end note that $\J_{X_G} = \C_0(\U)$ for some proper open set $\U\subseteq G^0$. (Actually we know that $\U = G^0_{\reg}$ but this is not really needed for this part of the proof!) Hence
\begin{align} \label{eq;iii}
\begin{split}
\phi(\J_{X_G}) X_G &=\overline{\spn} \{(f\circ r)F\mid f_{\mid \U^c}=0 \} \\
&=N(r^{-1}(\U)^c),
\end{split}
\end{align}
according to Lemma~\ref{topology}.

Since $\phi(\J_{X_G})$ acts degenerately on $X_G$, (\ref{eq;iii}) shows that $r^{-1}(\U)^c \neq \emptyset$. Let $e \in r^{-1}(\U)^c$ and let $F\in C_c(G^1)\subseteq X_{G}$ with $F(e)=1$ and $F(e')=0$, for any other $e'\in G^1$ with $s(e')=s(e)$. Consider the one dimensional representation $\sigma: C_0(G_0) \rightarrow \bbC$ coming from evaluation at $s(e)$. We claim that 
\[
\phi_{X_G}(\J_{X_G})X_G \otimes_{\sigma}\bbC\neq X_G\otimes_{\sigma}\bbC.
\]
Indeed for any $G \in \phi(\J_{X_G}) X_G = N(r^{-1}(\U)^c)$ we have 
\begin{align*}
\langle F\otimes_{\sigma}1, G\otimes_{\sigma}1 \rangle &=\langle 1, \sigma(\langle F,G\rangle 1)= \langle F,G\rangle s(e)\\
							&=\sum_{s(e')=s(e)} \overline{F(e')}G(e')\\
							&= \overline{F(e)}G(e) = 0.
\end{align*}
Furthermore,
\[
\langle F\otimes_{\sigma}1, F\otimes_{\sigma}1 \rangle s(e)= |F(e)|^2 =1
\]
and so $0 \neq F\otimes_{\sigma}1 \in (\phi_{X_G}(\J_{X_G})X_G \otimes_{\sigma}\bbC)^{\perp}$. This establishes the claim and finishes the proof of (i) implies (ii).

Finally we need to show that (ii) is equivalent to (iii). Notice that (\ref{eq;iii}) implies that $\phi(\J_{X_G})$ acts degenerately on $X_G$ if and only if
\[
r^{-1}(\U)^c=r^{-1}(G^0_{\reg})^c=\emptyset.
\]
The conclusion now follows from Lemma~\ref{l;toppled}.
\end{proof}

The statement of the previous Theorem takes its most pleasing form when $G^0$ is a compact space. In that case $\T^+_X$ is hyperrigid if and only is $G^1$ is compact and $r(G^1)\subseteq G^0$ is clopen.


\vspace{0.1in}

{\noindent}{\it Acknowledgement.} Both authors would like to thank MacEwan University for providing project funding to bring the first author out to Edmonton for a research visit. The first author received support for this project in the form of a Summer Research Award from the Thomas Harriott College of Arts Sciences at ECU. He also expresses his gratitude to Adam Dor-On and Guy Salomon for several discussions regarding their work on hyperrigidity. The second author was partially supported by an NSERC grant.



\begin{thebibliography}{99}

\bibitem{Arv} W. Arveson,
\textit{The noncommutative Choquet boundary II: hyperrigidity}, Israel J. Math. {\bf 184} (2011), 349--385.


\bibitem{BRO} N. Brown and N. Ozawa,
\textit{$\ca$--algebras and finite-dimensional approximations},
Graduate Studies in Mathematics \textbf{88}, American Mathematical Society, Providence, RI, 2008. xvi+509 pp. 

\bibitem{DorSal} A. Dor-On and G. Salomon, \textit{Full Cuntz-Krieger dilations via non-commutative boundaries}, J. London Math. Soc. \textbf{98} (2018), 416--438.

\bibitem{Duncan} B. Duncan, \textit{Certain free products of graph operator algebras}, J. Math. Anal. Appl. \textbf{364} (2010), 534--543.


\bibitem{Kak}  E.T.A. Kakariadis,
\textit{The Dirichlet property for tensor algebras}, Bull. Lond. Math. Soc. \textbf{45} (2013), 1119--1130.

\bibitem{KatsLoc}  E. Katsoulis, 
\textit{Local maps and the representation theory of operator algebras}, Trans. Amer. Math. Soc. \textbf{368} (2016), 5377--5397.

\bibitem{Kat} E. Katsoulis, \textit{ C$^*$-envelopes and the Hao-Ng isomorphism for discrete groups}, Inter. Math. Res. Not. (2017), 5751--5768.

\bibitem{KatsoulisKribsJFA} E. Katsoulis and D. Kribs, \textit{Tensor algebras of $C^*$-correspondences and their $\ca$-envelopes}, J. Funct. Anal.  \textbf{234}  (2006), 226--233.

\bibitem{KatRamMem} E. Katsoulis and C. Ramsey, \textit{Crossed products of operator algebras},
Mem. Amer. Math. Soc {\bf 258} (2019), no. 1240, vii+85 pp.

\bibitem{KatRamCP2} E. Katsoulis and C. Ramsey, \textit{Crossed products of operator algebras: applications of Takai duality}, J. Funct. Anal. {\bf 275} (2018), 1173--1207. 

\bibitem{KatRamHN} E. Katsoulis and C. Ramsey, \textit{The non-selfadjoint approach to the Hao-Ng isomorphism problem}, preprint arXiv:1807.11425. 

\bibitem{Katsura} T. Katsura, \textit{A class of $\ca$-algebras generalizing both graph algebras and homeomorphism $\ca$-algebras. I. Fundamental results}, Trans. Amer. Math. Soc.  \textbf{356}  (2004), 4287--4322.

\bibitem{KatsuraJFA} T. Katsura, \textit{On $\ca$-algebras associated with $\ca$-correspondences},  J. Funct. Anal.  \textbf{217}  (2004), 366--401.


\bibitem{MS} P. Muhly and B. Solel, \textit{Tensor algebras over $C^*$-correspondences: representations, dilations, and $C^*$-envelopes},  J. Funct. Anal.  \textbf{158}  (1998), 389--457.




\bibitem{Raeburn} I. Raeburn,
   \textit{Graph algebras}, CBMS Regional Conference Series in Mathematics \textbf{103},
   American Mathematical Society,
   Providence, RI, 2005.
   
   \bibitem{Sal} G. Salomon,
   \textit{Hyperrigid subsets of Cuntz-Krieger algebras and the property of rigidity at zero}, J. Operator Theory {\bf 81} (2019), 61--79.

\end{thebibliography}
\end{document}